\documentclass{elsarticle}
\usepackage{amssymb, latexsym, comment, amsmath, amsthm, upgreek, epsf, epsfig, color, natbib, graphicx}

\makeatletter
\def\ps@pprintTitle{%
\let\@oddhead\@empty
\let\@evenhead\@empty
\def\@oddfoot{\centerline{\thepage}}%
\let\@evenfoot\@oddfoot}
\makeatother

\newtheorem{thm}{Theorem}[section]
\newtheorem{lemma}[thm]{Lemma}
\newtheorem{prop}[thm]{Proposition}
\newtheorem{cor}[thm]{Corollary}

\newtheorem{ex}[thm]{Example}

\begin{document}
\begin{frontmatter}
\title{Characterizing Round Spheres Using Half-Geodesics}
\author[1]{Ian M Adelstein} 
\author{Benjamin Schmidt \fnref{fn2}}
\begin{abstract} A half-geodesic is a closed geodesic realizing the distance between any pair of its points. All geodesics in a round sphere are half-geodesics.  Conversely, this note establishes that Riemannian spheres with all geodesics closed and \textit{sufficiently} many half-geodesics are round.
\end{abstract}
\begin{keyword}Closed geodesics \sep Zoll spheres \sep Blaschke manifolds 
\end{keyword}
\fntext[fn1]{ian.adelstein@yale.edu}
\fntext[fn2]{schmidt@math.msu.edu}
\end{frontmatter}

%
%
%
%

\section{Introduction}

Geodesics, locally distance minimizing curves, need not globally minimize distance. For example, closed geodesics do not minimize beyond half their prime length. \emph{Half-geodesics}, defined as closed geodesics that restrict to minimizing geodesics on \textit{all} subsegments of half the prime length, are the maximally minimizing closed geodesics.  

A closed Riemannian manifold with non-zero fundamental group has a half-geodesic; a non-contractible closed curve of shortest length is one such \cite[Lemma 4.1]{Sor}. Contrastingly, closed simply connected Riemannian manifolds have a closed geodesic \cite{LF} but need not have a half-geodesic as discussed below.  \emph{Blaschke manifolds}, defined as closed Riemannian manifolds with equal injectivity radius and diameter, lie at the other extreme.

\begin{thm}\label{one} A closed Riemannian manifold is Blaschke if and only if all geodesics are (closed) half-geodesics. 
\end{thm}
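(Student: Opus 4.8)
The plan is to route everything through the \emph{cut time function} $c:UM\to(0,\infty)$ on the unit tangent bundle, where $c(v)$ is the distance from the basepoint of $v$ to the first cut point along the geodesic $\gamma_v$. I will use two standard facts: $c$ is continuous, and $\mathrm{inj}(p)=\min_{v\in U_pM}c(v)$, so that $\mathrm{inj}(M)=\min_{UM}c$. First I record two elementary inequalities valid on any closed $M$. Every cut point lies within distance $\mathrm{diam}(M)$ of its basepoint, since the geodesic minimizes up to the cut point; hence $\max_{UM}c\le\mathrm{diam}(M)$. Conversely, if $p,q$ realize the diameter then a minimizing geodesic $\gamma_v$ reaches $q=\gamma_v(d(p,q))$ at or before its cut point, so $\mathrm{diam}(M)\le c(v)\le\max_{UM}c$. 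Thus $\max_{UM}c=\mathrm{diam}(M)$ while $\min_{UM}c=\mathrm{inj}(M)$, so (for $\dim M\ge 2$, where $UM$ is connected; the case $\dim M=1$ being trivial) $M$ is Blaschke precisely when $c$ is constant. The theorem then reduces to matching the half-geodesic hypotheses with constancy of $c$.

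Assuming every geodesic is a half-geodesic, fix a unit vector $v$ and let $L(v)$ be the prime length of the closed geodesic $\gamma_v$. A closed geodesic never minimizes beyond half its prime length, so $c(v)\le L(v)/2$; the half-geodesic condition gives minimization on $[0,L(v)/2]$, hence $c(v)\ge L(v)/2$. Therefore $c(v)=L(v)/2$, and $L=2c$ is continuous on $UM$. Since all geodesics are closed, the geodesic flow on the compact manifold $UM$ has all orbits periodic, so by Wadsley's theorem it admits a common period $P$, and each prime length divides it: $P/L(v)\in\mathbb Z_{>0}$. This integer-valued function is continuous (as $L$ is continuous and bounded away from $0$) on the connected space $UM$, hence constant; so $L$, and therefore $c=L/2$, is constant, and $M$ is Blaschke by the reduction above.

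For the converse, assume $\mathrm{inj}(M)=\mathrm{diam}(M)=:r$. For every unit $v$ one has $c(v)\ge\mathrm{inj}(M)=r$ and $c(v)\le\mathrm{diam}(M)=r$, so $c\equiv r$: from every point, every geodesic minimizes exactly on a segment of length $r$. The remaining task is that each geodesic is a \emph{closed} half-geodesic, and the only substantive input here is the classical structural fact that a Blaschke manifold has all geodesics closed with common prime length $2r$; I would invoke the Blaschke-manifold theory (e.g.\ Besse) rather than reprove it. Granting $L\equiv 2r$, each geodesic is closed, and since $c\equiv r$ forces minimization on every length-$r$ subsegment (translate the basepoint along the geodesic and apply $c\equiv r$ at the new basepoint), it minimizes on every subsegment of half its prime length, i.e.\ it is a half-geodesic.

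The main obstacle is the converse's reliance on the closed-geodesic structure of Blaschke manifolds: deducing $c\equiv r$ from $\mathrm{inj}=\mathrm{diam}$ is immediate, but upgrading ``every geodesic minimizes on length $r$'' to ``every geodesic is \emph{closed} of length exactly $2r$'' is precisely the nontrivial refocusing phenomenon, already visible in $\mathbb{CP}^n$, where a geodesic returns to its starting point rather than to some arbitrary farthest point. In the forward direction the only delicate point is constancy of the prime length, for which continuity of $c$ together with Wadsley's common-period theorem is the natural device.
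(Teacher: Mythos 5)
Your proposal is correct and takes essentially the same route as the paper: for the nontrivial direction you identify the cut time with half the prime length ($c = L/2$), use continuity of $c$ together with Wadsley's common-period theorem and connectedness of the unit tangent bundle to force $c$ constant, and conclude via $\mathrm{inj} \leq \mathrm{diam} \leq c \leq \mathrm{inj}$, exactly as in the paper. For the converse you invoke the same Blaschke structure theory (Besse) that the paper cites as \cite[Proposition 5.39]{besse}; your derivation of the half-geodesic property from $c \equiv r$ is just a small elaboration of that citation.
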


  A \textit{Besse metric} on a smooth manifold is a Riemannian metric with all geodesics closed. Spheres in each dimension admit Besse metrics that are not round \cite[Chapter 4]{besse}.

\begin{thm}\label{two} A Besse $n$-sphere $M$ is Blaschke if
\begin{enumerate}
\item all prime geodesics have equal length, and
\item each point in $M$ lies in a half-geodesic.
\end{enumerate}
\end{thm}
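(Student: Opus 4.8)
The plan is to invoke Theorem~\ref{one} and thereby reduce the problem to showing that \emph{every} geodesic of $M$ is a half-geodesic. Write $2L$ for the common prime length supplied by hypothesis~(1); since $M$ is Besse, every unit-speed geodesic is then closed with prime period $2L$, so the geodesic flow $\phi^t$ on the unit tangent bundle $SM$ is periodic with $\phi^{2L}=\mathrm{id}$. For $v\in SM$ let $\mathrm{cut}(v)$ denote the cut time of $\gamma_v$; this is continuous on the compact space $SM$. Two elementary observations drive the argument. First, $\mathrm{cut}(v)\le L$ for every $v$: after length $L$ the two complementary arcs of the closed geodesic $\gamma_v$ both join $\gamma_v(0)$ to $\gamma_v(L)$ with length $L$, so $\gamma_v$ cannot minimize past $L$. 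Second, $\gamma_v$ is a half-geodesic precisely when $\mathrm{cut}\equiv L$ along its orbit. Since $\mathrm{cut}\le L$ everywhere and $\mathrm{inj}(M)=\min_{SM}\mathrm{cut}$, it therefore suffices to prove $\mathrm{inj}(M)=L$; then $\mathrm{cut}\equiv L$, every geodesic is a half-geodesic, and Theorem~\ref{one} gives that $M$ is Blaschke.

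Next I would analyze the minimum of $\mathrm{cut}$ via Klingenberg's dichotomy. Choose $v_0$ with $\mathrm{cut}(v_0)=\mathrm{inj}(M)$ and set $p=\gamma_{v_0}(0)$. Either (b) the nearest cut point of $p$ is joined to $p$ by a second minimizing geodesic, in which case the two minimizers fit together into a smooth closed geodesic of length $2\,\mathrm{inj}(M)$; or (a) that cut point is conjugate to $p$ along $\gamma_{v_0}$, and then $\mathrm{conj}(M)=\mathrm{inj}(M)$. Case~(b) is dispatched by hypothesis~(1): the resulting closed geodesic, being an iterate of a prime geodesic of length $2L$, has length a positive multiple of $2L$; as $2\,\mathrm{inj}(M)\le 2L$ this forces $\mathrm{inj}(M)=L$, as desired. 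Everything thus reduces to excluding a first conjugate point strictly before $L$, equivalently to proving that the conjugate radius satisfies $\mathrm{conj}(M)\ge L$.

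This last step is where hypothesis~(2) must enter, and I expect it to be the main obstacle. It is genuinely needed: non-round Besse spheres exist, and Bott--Samelson topology constrains only the total count of conjugate points along a prime geodesic, not the location of the first one, so no purely topological input can force $\mathrm{conj}(M)\ge L$. The structural tools I would exploit are that periodicity of $\phi^t$ makes every normal Jacobi field $2L$-periodic and makes $\phi^L$ a fixed-point-free involution of $SM$ (a fixed point would be a closed geodesic of length $L$, violating~(1)); moreover the half-geodesics furnished by~(2) pass through every point and carry no conjugate point in $(0,L)$, so along half-geodesic directions the conjugate time is at least $L$. The plan is to leverage the minimality of $\mathrm{conj}(M)$ together with these half-geodesic directions, via a second-variation and index comparison against the involution $d\phi^L$ acting on normal Jacobi fields, to show that a first conjugate point before $L$ at a minimizing direction $v_0$ is incompatible with the abundance of half-geodesics guaranteed by~(2). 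Establishing this incompatibility is the crux of the proof.
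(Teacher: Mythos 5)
There is a genuine gap, and you have named it yourself: the exclusion of a first conjugate point before $L$ (case (a) of your Klingenberg dichotomy) is exactly the content of the theorem, and your proposal ends with ``Establishing this incompatibility is the crux of the proof'' rather than an argument. Your reduction to $\mathrm{inj}(M)=L$ and your disposal of case (b) via hypothesis (1) are correct (and match the paper's reduction $\mathrm{inj}\leq\mathrm{diam}\leq 1$, ``it suffices to prove $1\leq\mathrm{inj}$''), but the structural tools you propose for case (a) cannot suffice on their own: the Besse $2$-spheres of \cite{Bal} cited in Example~\ref{none} have all prime geodesics of the same length $2L$ with $2L$ exceeding twice the diameter, hence $\mathrm{inj}<L$; since they contain no closed geodesic shorter than $2L$, Klingenberg forces their conjugate radius to equal $\mathrm{inj}<L$. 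Yet in these examples every normal Jacobi field is $2L$-periodic and $\phi^L$ is a fixed-point-free involution, i.e., everything you list survives except hypothesis (2). So hypothesis (2) must carry the entire load, and it is hard to see how it can do so infinitesimally along $\gamma_{v_0}$: it supplies only \emph{one} half-geodesic through $p=\gamma_{v_0}(0)$, in a direction a priori unrelated to $v_0$, so a second-variation or index comparison along $\gamma_{v_0}$ has no evident way to see it.

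The paper's use of hypothesis (2) is global rather than local, and this is the missing idea. Given any $p$ and unit vector $v$, the half-geodesic through $p$ produces a point $q$ with $d(p,q)=1=\mathrm{diam}$, so \emph{every} point is diameter-realizing; Proposition~\ref{key} then shows that $\exp_p$ restricted to the unit sphere of $T_pM$ is constant, whence $\gamma_v(1)=q$ and $\gamma_v$ minimizes, giving $\mathrm{inj}\geq 1$ directly (no pass through Theorem~\ref{one} or through the conjugate radius is needed). Proposition~\ref{key} itself is Morse theory on the path space $\Lambda(p,q)$: one uses $\pi_{n-1}(\Lambda(p,q))\cong\mathbb{Z}$, the fact that critical values of $E$ are squares of odd integers, the index bound of Lemma~\ref{index} (segments of length greater than two have index at least $n$) to push a nontrivial sphere of paths below energy level $9$, and the surjectivity of the associated map of a nonzero class to derive a contradiction with $d(p,s_0)=D>1-\epsilon$. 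If you want to complete your outline, you should replace the proposed $d\phi^L$-involution comparison with an argument of this global topological type; as it stands, the crux step is not only unproven but is aimed at a mechanism that the examples above suggest cannot work.
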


Hypothesis 1 is satisfied by all Besse metrics on the $n$-dimensional sphere, except possibly when $n=3$ \cite{GG,RW}.  Moreover, a Riemannian metric on a sphere is Blaschke if and only if it is round  \cite[Appendix D]{besse}. These facts and Theorem \ref{two} imply the next Corollary.
\begin{cor}\label{c1}
If $n \neq 3$, then a Besse metric on the $n$-sphere in which all points lie in a half-geodesic is round.
\end{cor}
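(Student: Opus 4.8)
The plan is to read Corollary \ref{c1} as a direct synthesis of Theorem \ref{two} with the two cited facts recorded immediately before it, so that the real work reduces to checking that the two hypotheses of Theorem \ref{two} are in force and then upgrading the conclusion from \emph{Blaschke} to \emph{round}. Concretely, I would fix a Besse metric on the $n$-sphere $M$ with $n \neq 3$ in which every point lies in a half-geodesic. Hypothesis 2 of Theorem \ref{two} — that each point of $M$ lies in a half-geodesic — then holds by assumption, so nothing needs to be proved there.

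Next I would verify Hypothesis 1, that all prime geodesics have equal length. This is exactly the content of the cited results \cite{GG,RW}: every Besse metric on the $n$-sphere has all prime geodesics of a common length whenever $n \neq 3$. The restriction $n \neq 3$ in the statement is present precisely to license this input, since the equal-length property is only known to remain open in dimension three. With both hypotheses confirmed, Theorem \ref{two} applies and shows that $M$ is Blaschke. Finally, because $M$ is a sphere, the equivalence of the Blaschke and round conditions for spheres \cite[Appendix D]{besse} promotes this to the desired conclusion that $M$ is round, which finishes the argument.

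I do not expect a genuine obstacle in the logical assembly, which is essentially bookkeeping. Rather, the subtlety is conceptual: one must recognize that all of the real difficulty has already been absorbed into the inputs — the rigidity statement of Theorem \ref{two}, the length-spectrum theorem of \cite{GG,RW} together with its unresolved case $n=3$, and the deep resolution of the Blaschke conjecture for spheres in \cite[Appendix D]{besse}. The only point requiring care is making sure the two hypotheses of Theorem \ref{two} are matched exactly to what the ambient assumptions and the citation \cite{GG,RW} supply.
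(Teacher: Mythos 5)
Your proposal is correct and matches the paper's argument exactly: the paper likewise invokes \cite{GG,RW} for the equal-length hypothesis when $n \neq 3$, applies Theorem \ref{two} to conclude the metric is Blaschke, and then uses the resolution of the Blaschke conjecture for spheres \cite[Appendix D]{besse} to conclude roundness. Nothing is missing.
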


In Theorem \ref{two}, the union of half-geodesics contains $M$.  Alternatively, one may assume this union contains a suitable codimension one totally-geodesic submanifold.

\begin{thm}\label{three} A Besse n-sphere $M$ is Blaschke if 
\begin{enumerate}
\item all prime geodesics have equal length, and
\item $M$ has a codimension one totally-geodesic submanifold $N$ each point of which lies in a half-geodesic of $M$ tangent to $N$. 
\end{enumerate}
\end{thm}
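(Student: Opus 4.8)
The plan is to deduce Theorem \ref{three} from Theorem \ref{two}: since the first hypotheses coincide, it suffices to prove that \emph{every} point of $M$ lies on a half-geodesic, after which Theorem \ref{two} forces $M$ to be Blaschke. Throughout I use that hypothesis (1) makes the geodesic flow on the unit tangent bundle $UM$ periodic of period $\ell$, that the cut-time function $c\colon UM\to(0,\ell/2]$ is continuous with $c\le \ell/2$ (a closed geodesic cannot minimize past half its prime length), and that a closed geodesic $\gamma$ is a half-geodesic precisely when $d(\gamma(s),\gamma(s+\ell/2))=\ell/2$ for all $s$. In particular the existence of at least one half-geodesic already gives $\operatorname{diam}(M)=\ell/2$.

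First I would record that $N$ inherits the hypotheses of Theorem \ref{two} internally. As $N$ is totally geodesic in the Besse manifold $M$, each geodesic of $N$ is a closed geodesic of $M$ of length $\ell$, so $N$ is itself Besse with all prime geodesics of length $\ell$. Moreover each half-geodesic $\gamma\subset N$ is a half-geodesic of $N$: for $q=\gamma(0)$ and $\bar q=\gamma(\ell/2)$ one has $\ell/2=d_M(q,\bar q)\le d_N(q,\bar q)\le \ell/2$, so $\gamma$ is $N$-minimizing on every half-length subsegment. Thus every point of $N$ lies on a half-geodesic of $N$, which identifies the antipodal structure on $N$ and serves as the seed for covering $M$.

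Next I would cover $M\setminus N$ by transverse geodesics. Because $N$ is a closed hypersurface, each $p\in M$ has a nearest point $q\in N$, and the minimizing segment from $p$ to $q$ meets $N$ orthogonally; hence $p$ lies on a closed geodesic $\eta$ of length $\ell$ crossing $N$ orthogonally, and these normal geodesics sweep out all of $M$. The crux is to promote the tangential minimization along $N$ to the single normal direction, i.e.\ to show such an $\eta$ is a half-geodesic. I would aim to do this by a focal/conjugate-point comparison along $\eta$: total geodesy of $N$ makes its second fundamental form vanish, so the $N$-Jacobi fields along $\eta$ are exactly those with vanishing initial derivative, while the $N$-half-geodesic foliation guarantees the absence of tangential conjugate points before $\ell/2$; combining these, together with the periodicity of the flow and the equality $\operatorname{diam}(M)=\ell/2$, should yield that every half-length subsegment of $\eta$ is minimizing.

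The main obstacle is exactly this last step: controlling the one transverse (focal) direction from purely tangential data, since the half-geodesics in $N$ only constrain curvatures tangent to $N$, whereas the first focal point of $N$ along $\eta$ depends on the mixed curvatures $K(\eta',\cdot)$. I expect to handle this by working in the space of oriented geodesics $UM/S^{1}$, where the half-geodesics of $N$ form a closed, flow-invariant family, and by an index-form comparison that upgrades the absence of tangential conjugate points before $\ell/2$ to the absence of \emph{all} conjugate points before $\ell/2$ along the normal geodesics. Once every point of $M$ is shown to lie on a half-geodesic, Theorem \ref{two} applies and $M$ is Blaschke.
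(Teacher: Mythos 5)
Your opening step agrees with the paper: a half-geodesic of $M$ tangent to the totally geodesic hypersurface $N$ is a half-geodesic of $N$, so $N$ satisfies the hypotheses of Theorem \ref{two} and is therefore Blaschke, hence round. But the core of your plan has a genuine gap, which you yourself flag as ``the main obstacle'' and address only with an expectation. You want every point of $M$ to lie on a half-geodesic (so Theorem \ref{two} applies to $M$), and for this you must show the closed geodesics crossing $N$ orthogonally are half-geodesics. The mechanism you propose --- an index-form comparison ruling out conjugate (focal) points before time $\ell/2$ along these normal geodesics --- cannot deliver this even if the mixed curvatures could be controlled: being a half-geodesic is a cut-locus condition, not a conjugate-locus condition. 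A geodesic segment free of conjugate points is merely a local minimum of the energy functional among paths with the same endpoints; it can still fail to minimize globally because a competing path is shorter, and this is precisely how closed geodesics in Besse spheres typically fail to be half-geodesics (cut points that are not conjugate points, as for the sections $x=0$ and $y=0$ in Example \ref{ellipsoid2}). So no Jacobi-field or index argument along the normal geodesics can, by itself, show they minimize up to half their length. Note also that what you are trying to prove directly is, a posteriori, equivalent to the theorem itself: once $M$ is Blaschke, Theorem \ref{one} says \emph{all} geodesics are half-geodesics. Your reduction therefore concentrates all of the difficulty into the unproven step rather than simplifying it.

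The paper avoids this entirely and never shows that points off $N$ lie on half-geodesics. Instead it invokes Bangert's theorem on manifolds with geodesic chords of constant length: after normalizing $\ell=2$, to prove $M$ is Blaschke it suffices to show that each geodesic $\gamma_v$ leaving a point $p \in N$ transversally satisfies $\gamma_v(t) \notin N$ for $t \in (0,1)$ and $\gamma_v(1) \in N$. Both facts then follow from Proposition \ref{key}: since $p$ lies on a half-geodesic of $M$, it realizes the diameter, so $\exp_p$ restricted to the unit sphere of $T_pM$ is a point map; hence $\gamma_v(1)$ equals the antipode $q \in N$ of $p$ in the round sphere $N$, the segment $\gamma_v|_{[0,1]}$ is minimizing (length one between points at distance one), and because two minimizing segments issuing from $p$ can meet again only at their endpoints, $\gamma_v((0,1))$ misses $N$, which is swept out by minimizing $N$-geodesics from $p$. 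If you wish to salvage your approach, you need either Bangert's chord theorem (or a substitute for it) as the bridge from the return condition to the Blaschke property, or a fundamentally different argument producing global minimization in the transverse directions; curvature and conjugate-point data alone will not suffice.
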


\begin{cor}\label{c2}
A Besse 2-sphere has a half-geodesic if and only if it is round.
\end{cor}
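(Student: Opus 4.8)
The plan is to deduce the corollary from Theorem \ref{three}, exploiting the low-dimensional coincidence that on a surface a closed geodesic is \emph{itself} a codimension one totally-geodesic submanifold. The forward direction I would dispatch in a sentence: a round metric is Blaschke, so by Theorem \ref{one} every geodesic is a half-geodesic, and in particular the round $2$-sphere has half-geodesics.

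For the converse, suppose $M$ is a Besse $2$-sphere possessing a half-geodesic $\gamma$ of prime length $L$. First I would verify that $\gamma$ is embedded. If $\gamma(s) = \gamma(t)$ for distinct parameters $s,t$, then the shorter of the two arcs joining these points has length at most $L/2$ and hence minimizes, so its positive length would equal the distance $0$ between the coinciding endpoints, a contradiction. Thus $\gamma$ is an embedded circle and therefore a genuine one-dimensional submanifold $N := \gamma$. Being one-dimensional, $N$ is automatically totally geodesic, so it is a codimension one totally-geodesic submanifold of the surface $M$.

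Next I would observe that $N$ meets Hypothesis 2 of Theorem \ref{three} with the single half-geodesic $\gamma$ playing every role at once: for each point $p \in N$, the half-geodesic $\gamma$ passes through $p$ and, being equal to $N$, is tangent to $N$ at $p$. Hypothesis 1 of Theorem \ref{three} holds for free, since $n = 2 \neq 3$ and, as noted after Theorem \ref{two}, all Besse metrics on $S^n$ with $n \neq 3$ have prime geodesics of equal length \cite{GG,RW}. Theorem \ref{three} then yields that $M$ is Blaschke, and the fact that a Blaschke sphere is round \cite[Appendix D]{besse} completes the argument.

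I expect the only genuinely delicate point to be the embeddedness of $\gamma$, which is needed so that $N = \gamma$ is a bona fide submanifold to which Theorem \ref{three} can be applied. The conceptual heart of the proof is simply the recognition that in dimension two a single half-geodesic simultaneously furnishes the totally-geodesic hypersurface $N$ and the family of tangent half-geodesics demanded by Theorem \ref{three}, so that the hypothesis of \emph{one} half-geodesic already suffices.
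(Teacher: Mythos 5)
Your proposal is correct and is essentially the paper's intended derivation: Corollary \ref{c2} is stated immediately after Theorem \ref{three} precisely because in dimension two a half-geodesic, once shown to be embedded (by the same minimizing-arc argument you give), is itself the required codimension-one totally-geodesic submanifold, with hypothesis 1 supplied by \cite{GG} and the final step by the fact that Blaschke spheres are round. One small correction of wording: a one-dimensional submanifold is not \emph{automatically} totally geodesic---it is totally geodesic exactly when it is a geodesic of $M$---but since your $N$ is the image of the closed geodesic $\gamma$, the conclusion stands and the proof is sound.
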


\section{Contextualizing Examples}\label{examples}

As Blaschke spheres are round, Theorem \ref{one} implies that Riemannian spheres with all geodesics (closed) half-geodesics are round. The following examples describe non-round spheres with zero, any natural number, or a continuum of half-geodesics.

\begin{ex}\label{none}\normalfont Metrics on the 2-sphere with no half-geodesics abound. Metrics close to singular spaces appear in \cite{Ade, Ho}. Besse metrics with a common prime length exceeding twice the diameter appear in \cite{Bal}. Corollary~\ref{c2} supplements these examples.  \end{ex}

\begin{ex}\label{ellipsoid2}\normalfont There exist Riemannian 2-spheres arbitrarily close to a round sphere with one half-geodesic: By \cite[Chapter IX, Theorem 4.1]{Morse} there exists $\epsilon > 0$ such that if $$1<a<b<c<1+\epsilon,$$ then closed geodesics in the triaxial ellipsoid $$(x/a)^2+(y/b)^2+(z/c)^2=1$$ have length larger than 1000 except for the three coordinate plane sections. The coordinate sections $x=0$ and $y=0$ contain the largest axis and fail to minimize between the antipodal points on their second axis.  The remaining coordinate section $z=0$ is the only half-geodesic. 
\end{ex}

\begin{ex} \label{exact}\normalfont For each integer $n \geq 2$, there exist Riemannian metrics on the 2-sphere with exactly $n$ half-geodesics \cite{Ade}; a sequence of such metrics is constructed converging to a doubled regular $2n$-gon in the Gromov-Hausdorff metric.  
\end{ex}

\begin{ex}\label{oblate}\normalfont There exist non-round Riemannian 2-spheres arbitrarily close to a round sphere swept out by a continuum of half-geodesics: Let $0<c<1$ and consider the oblate ellipsoid $$x^2 + y^2 + (z/c)^2 = 1.$$ This ellipsoid is rotationally symmetric about the $z$-axis. The meridians are half-geodesics of length twice the diameter. The equator is a closed geodesic with length greater than twice the diameter, and therefore is not a half-geodesic. 
\end{ex}

\section{Proofs}

\begin{proof}[Proof of Theorem~\ref{one}] Geodesics in a Blaschke manifold $M$ are (closed) half-geodesics by \cite[Proposition 5.39]{besse}.

Conversely, assume all geodesics in $M$ are (closed) half-geodesics.  Let $SM$ denote the unit-sphere bundle of $M$ and $$C \colon SM \to \mathbb{R}$$ the cut time function.  Then $C(v)$ equals half the prime length of the closed geodesic with initial velocity $v \in SM$.  The values of $C$ are discrete by \cite{wa}.  As $C$ is continuous and $SM$ is connected, $C$ is constant.  Therefore, all prime closed geodesics have equal length $2C$ and minimize up to but not beyond length $C$.  Conclude $$inj\leq diam \leq C \leq inj.$$
\end{proof}

A proof of the next Proposition is presented at the end of this section. 

\begin{prop}\label{key}
Let $M$ a Besse $n$-sphere with diameter one in which all prime geodesics have length two. If $p \in M$ is a diameter realizing point, then the restriction of $\exp_p$ to the unit sphere in $T_pM$ is a point map. 
\end{prop}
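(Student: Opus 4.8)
The plan is to show that $p$ enjoys the Wiedersehen property: every unit-speed geodesic issuing from $p$ reconverges to a common point at time $1$. With the normalization fixed ($\mathrm{diam}=1$, prime length $2$), I would first record two elementary facts. Writing $\gamma_v$ for the geodesic with $\gamma_v(0)=p$ and $\gamma_v'(0)=v$, the reversal identity $\gamma_{-v}(t)=\gamma_v(2-t)$ shows that for $t\in(1,2)$ the point $\gamma_v(t)=\gamma_{-v}(2-t)$ is reached from $p$ in time $2-t<t$, so $\gamma_v$ cannot minimize past time $1$; hence $C(v)\le 1$ for every $v$. Next, fixing $q$ with $d(p,q)=1$ and a minimizing geodesic $\sigma=\gamma_w$ from $p$ to $q$, the arc $\sigma|_{[1,2]}$ has length $1=d(q,p)$ and is therefore also minimizing; thus $C(w)=C(-w)=1$ and $\gamma_w(1)=q$. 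So at least one direction already refocuses at the farthest point, and the task is to promote this to all directions and to a single point.

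The engine I would use is the conjugate-point structure forced by the periodic geodesic flow. Since every prime geodesic has length $2$, we have $\exp_p(2v)=p$ for all $v$ in the unit sphere $S\subset T_pM$. Differentiating this identity along a curve $v(s)\subset S$ with $v'(0)=\xi\in v^{\perp}$ gives $d(\exp_p)_{2v}(2\xi)=0$; letting $\xi$ range over the $(n-1)$-dimensional space $v^{\perp}$ shows that the Jacobi field $J(t)=d(\exp_p)_{tv}(t\xi)$ satisfies $J(0)=J(2)=0$. Hence along every geodesic from $p$ the value $t=2$ is a conjugate time of maximal multiplicity $n-1$. The goal then becomes to show that these Jacobi fields already vanish at $t=1$, with full multiplicity $n-1$, uniformly in $v$, which is precisely the statement that $\exp_p$ collapses the radius-one sphere to a point.

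To establish this uniform refocusing I would invoke Warner's regularity theory for the tangent conjugate locus \cite{wa}, together with the extremal hypothesis that the diameter equals half the prime length. The role of this equality is to rule out the slack present in non-round Zoll spheres, where the diameter is strictly smaller than half the prime length and no direction refocuses: here the diameter-realizing direction $w$ already forces a first conjugate/cut time equal to $1$. As in the discreteness argument used for Theorem~\ref{one}, the first-conjugate-radius function is continuous and, by Warner's normal forms, locally constant on the regular part; combined with the maximal-multiplicity collapse at $t=2$ and the connectedness of $S$, this should pin the first conjugate time to the constant value $1$ of multiplicity $n-1$ in every direction. Warner's local description of $\exp_p$ near such a maximally degenerate conjugate vector then yields that all of $S$ is carried to the single point $q$.

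The main obstacle I anticipate is exactly this \emph{uniformity and maximality}: propagating the refocusing from the single direction $w$ to every direction, and excluding both geodesics that cut early (with $C(v)<1$ produced by a competing minimizing geodesic rather than by a conjugate point) and conjugate points of submaximal multiplicity. This is where the full strength of the Besse and equal-length hypotheses, rather than the diameter bound alone, must enter; without them a farthest point can be a non-conjugate cut point and no collapse occurs. A secondary point needing care is confirming that the collapse image is a single point and not merely a lower-dimensional set, which follows once the multiplicity is shown to equal $n-1$ in all directions.
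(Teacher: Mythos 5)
Your preliminary steps are correct: $C(v)\le 1$ for every unit $v$, the refocusing $\gamma_w(1)=q$ along a minimizing direction, the multiplicity-$(n-1)$ conjugacy at $t=2$ obtained by differentiating the identity $\exp_p(2v)=p$, and the final reduction (if the first conjugate time were $1$ with multiplicity $n-1$ in \emph{every} direction, then the restriction of $\exp_p$ to the unit sphere has vanishing differential and, by connectedness, is constant). The gap is precisely the step you flag as the main obstacle, and the tool you propose for it cannot do the job. Warner's theory gives normal forms for $\exp_p$ near conjugate vectors; it does not make the first-conjugate-time function locally constant, and that claim is false. Concretely: on any non-round Zoll surface, all the ingredients you invoke are present at every point $p$ --- full-multiplicity conjugacy at $t=2$, connectedness of the unit sphere, Warner regularity --- so if they implied local constancy (hence constancy) of the first conjugate time, then $\exp_p$ would collapse a metric sphere to a point for every $p$, making the surface Wiedersehen and hence round by Green's theorem, a contradiction. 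Thus these ingredients alone cannot pin the first conjugate time anywhere; the diameter hypothesis must enter in an essential, global way, whereas in your outline it enters only through the single direction $w$. And even there it produces a \emph{cut} time equal to $1$, not a conjugate time: a farthest point may be a non-conjugate cut point, so the base case of your propagation argument is also unestablished.

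The paper supplies exactly the global mechanism your local argument lacks: Morse theory on the path space $\Lambda(p,q)$. Since $\pi_{n-1}(\Lambda(p,q))\cong\mathbb{Z}$, one takes a nontrivial $(n-1)$-sphere of paths and pushes it below energy level $9$; this is where the Besse and equal-length hypotheses enter, via Lemma \ref{index} (geodesic segments of length greater than two have index at least $n$). The associated map $S^{n-1}\times[0,1]\to M$ of a nontrivial class is surjective, so some path of energy less than $(D+\epsilon)^2$ passes through the farthest point $s_0$ of a small metric sphere about $q$, forcing $D=1-\epsilon$, i.e., every point of that sphere lies on a minimizing geodesic from $p$ to $q$, which yields the point map. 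Any repair of your conjugate-point approach would need an input of this global, topological kind to convert the hypothesis that the diameter is realized at $p$ into a statement about all directions at once.
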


\begin{proof}[Proof of Theorem~\ref{two} assuming Proposition \ref{key}] 
Rescale the metric, if necessary, so prime geodesics have length two.  Then $inj\leq diam \leq 1$.  It suffices to prove $1\leq inj$, or equivalently, each geodesic in $M$ of length one is minimizing.  

Fix $p \in M$ and a unit-length vector $v \in T_pM$.  Let $$\gamma_v:[0,1] \rightarrow M$$ denote the length one geodesic segment with $\dot{\gamma}_v(0)=v$.  By assumption, the point $p$ lies in a half-geodesic of length two.  Let $q$ denote the point in this half-geodesic with $d(p,q)=1$.  By Proposition \ref{key}, $\gamma_v(1)=q$.  As the endpoints of $\gamma_v$ are distance one apart and $\gamma_v$ has length one, $\gamma_v$ is minimizing.
\end{proof}

\begin{proof}[Proof of Theorem~\ref{three} assuming Theorem \ref{two} and Propostion \ref{key}] 
Rescale the metric, if necessary, so prime geodesics have length two.  Fix $p \in N$ and a unit vector $v \in T_pM$ that is \textit{not} tangent to $N$ at $p$. Let $$\gamma_v:[0,1] \rightarrow M$$ denote the length one geodesic determined by $\dot{\gamma}_v(0)=v$.  By \cite{bangert1983}, it suffices to prove  $$t \in (0,1) \implies \gamma_v(t) \notin N\,\,\,\,\,\, \text{and}\,\,\,\,\,\, \gamma_v(1) \in N.$$ 

A half-geodesic in $M$ tangent to $N$ is also a half-geodesic in $N$.  By Theorem \ref{two}, $N$ is a round sphere of diameter one.  Let $q$ be the point antipodal to $p$ in $N$.  By Proposition \ref{key}, $$\gamma_v(1)=q \in N.$$  Moreover, as in the previous proof, $\gamma_v:[0,1] \rightarrow M$ is a minimizing geodesic.  As $N$ is round, each geodesic leaving $p$ starting tangent to $N$ is minimizing up to length one.  As two minimizing geodesic segments with a common initial point can only intersect again in their endpoints, the requisite condition $$ t\in (0,1) \implies \gamma_v(t) \notin N$$ is satisfied.
\end{proof}

Some well-known preliminary material is needed in the proof of Proposition \ref{key}:  

Given $(x,y) \in M \times M$, let $\Lambda(x,y)$ denote the pathspace consisting of piecewise smooth maps $c:[0,1]\rightarrow M$ with $c(0)=x$ and $c(1)=y$.  Define the length and energy functions $$L: \Lambda(x,y) \rightarrow [0,\infty)\,\,\,\,\,\text{and}\,\,\,\,\,E:\Lambda(x,y) \rightarrow [0,\infty)$$  by $$L(c)=\int_0^1 ||\dot{c}(t)||dt\,\,\,\,\, \text{and}\,\,\,\,\, E(c)=\int_0^1 ||\dot{c}(t)||^2dt.$$ The Cauchy-Schwartz inequality implies $$L^2(c) \leq E(c)$$ with equality holding if and only if $c$ has constant speed $||\dot{c}(t)||$.  Given $e \in [0,\infty)$, let $$\Lambda_e(x,y)=E^{-1}([0,e))\,\,\,\,\, \text{and}\,\,\,\, \Lambda^e(x,y)=E^{-1}([0,e]).$$

A path is a critical point for $E$ if and only if the path is a geodesic.  The index of a critical point $\gamma \in \Lambda(x,y)$ equals the number of parameters $s \in (0,1)$ for which $\gamma(s)$ is conjugate to $\gamma(0)$ along $\gamma$, counted with multiplicities.

\begin{lemma}\label{index}
If $M$ is a Besse n-sphere in which all prime geodesics have length two, then a geodesic segment of length greater than two has index at least n.  
\end{lemma}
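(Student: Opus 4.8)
The plan is to use the periodicity of the geodesic flow to locate the conjugate points along a geodesic and then simply count them. Write $\phi_t \colon SM \to SM$ for the geodesic flow. Since every prime geodesic has length two, $\phi_2 = \mathrm{id}_{SM}$, and hence $d\phi_2 = \mathrm{Id}$. Reading the normal Jacobi fields off of $d\phi_t$, this means that every Jacobi field $J$ along a unit-speed geodesic $\gamma$ orthogonal to $\dot\gamma$ is two-periodic, in the sense that $(J(t+2),\nabla_{\dot\gamma}J(t+2))=(J(t),\nabla_{\dot\gamma}J(t))$ for all $t$. Two consequences are immediate. First, every normal Jacobi field with $J(0)=0$ also satisfies $J(2)=0$, so $\gamma(2)$ is conjugate to $\gamma(0)$ with the full multiplicity $n-1$. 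Second, the set of parameters $s$ at which $\gamma(s)$ is conjugate to $\gamma(0)$ is invariant under $s\mapsto s+2$. Equivalently, since $\phi_2=\mathrm{id}$, every geodesic issuing from $p=\gamma(0)$ returns to $p$ at time two, so $\exp_p$ collapses the sphere of radius two in $T_pM$ to the point $p$, which again forces the conjugate point of multiplicity $n-1$ at $s=2$.

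The crux is to produce a conjugate point strictly inside a single period, i.e. some $\gamma(t_1)$ with $t_1\in(0,2)$ conjugate to $\gamma(0)$. This is the main obstacle, and it cannot follow from periodicity alone: an identity Poincar\'e map is consistent with no interior conjugate points, as on a flat manifold, so the sphere topology must enter. The family $v\mapsto\gamma_v|_{[0,2]}$, indexed by $v$ in the unit sphere of $T_pM$, is an $(n-1)$-sphere of geodesic loops based at $p$, and it is precisely the Bott--Samelson cycle generating $H_{n-1}$ of the based loop space of the sphere $M$. Morse theory on the loop space then forces these geodesic loops to have positive index, equivalently to carry a conjugate point in the open interval $(0,2)$. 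Making this rigorous — extracting an interior conjugate point from the loop-space topology of the sphere with no curvature hypothesis — is where I expect to lean on the Bott--Samelson theory of Besse manifolds (cf. \cite[Chapter 7]{besse}); that this interior conjugate point is genuinely needed, rather than conjugate points accumulating only in $(2,L)$, is clear because the conclusion is asserted for all $L>2$, including $L$ arbitrarily close to $2$.

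With such a $t_1\in(0,2)$ in hand, the count is immediate. Let $\gamma\colon[0,L]\to M$ be a unit-speed geodesic segment with $L>2$. Then the open interval $(0,L)$ contains both $t_1$, at which $\gamma$ carries a conjugate point of multiplicity at least one, and the parameter $2$, at which $\gamma$ carries a conjugate point of multiplicity $n-1$; these parameters are distinct since $t_1<2$. By the Morse index theorem the index of $\gamma$ equals the number of parameters $s\in(0,L)$ at which $\gamma(s)$ is conjugate to $\gamma(0)$, counted with multiplicity, and this total is at least $1+(n-1)=n$.
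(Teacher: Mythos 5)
Your periodicity bookkeeping is fine and matches the standard skeleton: all prime geodesics having length two gives $\phi_2=\mathrm{id}_{SM}$, hence a conjugate point of full multiplicity $n-1$ at $t=2$, and with one further conjugate point $t_1\in(0,2)$ the Morse index theorem yields index at least $1+(n-1)=n$. But the crux you correctly isolate --- that \emph{every} geodesic carries a conjugate point in $(0,2)$ --- is where your argument has a genuine gap, in two respects. First, the mechanism you state is logically insufficient: the fact that the cycle $v\mapsto\gamma_v|_{[0,2]}$ generates $H_{n-1}$ of the loop space does \emph{not} force positive index. A connected nondegenerate critical manifold of local minima (index zero) can perfectly well carry the generator of $H_{n-1}$: in round (or any Besse) $\mathbb{R}P^n$ the family of period-length geodesic loops at $p$ corresponds, in the non-contractible component of the based loop space, to the $S^{n-1}$ of minimal geodesics between lifted antipodes; it generates $H_{n-1}$ of that component, has index zero (no interior conjugate points), and indeed the conclusion of the lemma fails there --- a geodesic of length slightly more than the prime length has index exactly $n-1$. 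So ``generates $H_{n-1}$'' is not the sphere-specific input; what must actually be used is more of the homology of $\Omega S^n$ (e.g.\ vanishing of reduced homology below degree $n-1$, a single class in degree zero, and perfectness/completability of the Bott--Samelson cycles, which together force the common index $k$ of the period loops to be $n-1$ rather than $0$). Second, you explicitly defer making this rigorous to an unspecified statement in \cite[Chapter 7]{besse}; since this step is the entire content of the lemma beyond routine periodicity, the proposal as written does not prove it --- although the fact you want ($k\geq 1$ for Besse spheres) is true and extractable from Bott--Samelson theory with the care indicated above.

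The paper takes a different and much cheaper route at this point, avoiding loop-space topology entirely. It observes that it suffices to exhibit a \emph{single} pair of conjugate points along \emph{some} geodesic of length less than two, after which the proof of \cite[Lemma 2.4]{Sch} applies verbatim (that argument supplies the propagation from one geodesic to all geodesics --- the step your Bott--Samelson appeal was meant to provide). The short conjugate pair comes from metric geometry: prime length two forces $\mathrm{diam}\leq 1$; Klingenberg's estimate gives $\mathrm{inj}=\min\{\text{conjugate radius},\,1\}$; if the conjugate radius attains the minimum it is at most one, and otherwise $\mathrm{inj}=\mathrm{diam}=1$, so $M$ is Blaschke, hence round by \cite[Appendix D]{besse}, with conjugate radius one. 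To repair your write-up, either quote a precise statement computing the index of the period loops of a Besse homotopy sphere, or adopt the paper's Klingenberg--Blaschke argument and borrow the propagation step.
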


\begin{proof}
After demonstrating some pair of points in $M$ are conjugate along a geodesic of length less than two, the proof of \cite[Lemma 2.4]{Sch} applies verbatim.

As prime geodesics have length two, the diameter does not exceed one. By Klingenberg's estimate, the injectivity radius equals the minimum of the conjugate radius and one, half the length of a shortest closed geodesic.  If the conjugate radius achieves this minimum, there is nothing left to prove.  If one achieves this minimum, then $M$ is a Blaschke sphere, hence round, and therefore has conjugate radius one.
\end{proof}

Given a continuous map $h:S^{n-1} \rightarrow \Lambda(x,y),$ define the \textit{associated map} $$\hat{h}:S^{n-1} \times [0,1] \rightarrow M$$ by $\hat{h}(\theta,t)=h(\theta)(t)$ for each $(\theta,t) \in S^{n-1} \times [0,1]$. If $h$ represents a non-zero class in $\pi_{n-1}(\Lambda(x,y))\cong \mathbb{Z}$, then the associated map $\hat{h}$ is surjective.

Given $\epsilon<1$, $c \in \Lambda(x,y)$, a point $z\in M$ satisfying $d(y,z)=\epsilon$, and a unit-speed geodesic $\tau:[0,\epsilon]\rightarrow M$ with $\tau(0)=y$ and $\tau(\epsilon)=z$, define $$\tau* c\in \Lambda(x,z)$$ by  \[ 
\tau* c(t)=
\begin{cases}
c(\frac{t}{1-\epsilon}) & \text{for } t\in [0,1-\epsilon]\\
\tau(t-(1-\epsilon)) & \text{for } t\in[1-\epsilon,1].\\
\end{cases}
\]  Then $$E(\tau*c)=\frac{E(c)}{1-\epsilon}+\epsilon.$$ Given a map $f:S^{n-1} \rightarrow \Lambda(x,y)$, the map $\tau f:S^{n-1}\rightarrow \Lambda(x,z)$ defined by $\tau f(\theta)=\tau*f(\theta)$ for each $\theta \in S^{n-1}$, represents a nonzero homotopy class of maps if and only if $f$ represents a nonzero homotopy class of maps.

\begin{proof}[Proof of Proposition \ref{key}]
The proof follows that of \cite[Theorem 1]{Sch}. Some details are ommitted. 

Let $p \in M$ be a diameter realizing point and $d(p,q)=1$. Since prime geodesics have length two, the critical values of $E:\Lambda(p,q) \rightarrow [0,\infty)$ are the squares of odd integers (cf. \cite[Lemma 2.2]{Sch}).

Fix a positive $\epsilon$ smaller than the injectivity radius at $q$.  In particular, $\epsilon<1$.  Let $S$ denote the sphere with center $q$ and radius $\epsilon$ and let $$D=\max\{d(p,s)\,\vert\, s \in S\}.$$  The strict triangle inequality implies that for each $s \in S$, $d(p,s)\geq 1-\epsilon$ with equality if and only if $s$ lies along a minimizing geodesic joining $p$ to $q$.  Therefore, $1-\epsilon \leq D$ with equality implying the Proposition.  \vskip 5pt

\noindent \textbf{Claim} If there is continuous map $g:S^{n-1} \rightarrow \Lambda_9(p,q)$ representing a nonzero class in $\pi_{n-1}(\Lambda(p,q))$, then the equality $1-\epsilon=D$ holds.\vskip 5pt

\begin{proof}[Proof of Claim] 
Seeking a contradiction, assume that $1-\epsilon<D$.  The strict triangle inequality implies that for each $s \in S$, $d(p,s)<d(p,q)+d(q,s)=1+\epsilon$.  Conclude $D<1+\epsilon$ and $$1<(D+\epsilon)^2<(1+2\epsilon)^2<9.$$ Therefore $E$ has no critical points in $E^{-1}([(D+\epsilon)^2,9))$ and $g$ is homotopic to a map $h:S^{n-1} \rightarrow \Lambda_{(D+\epsilon)^2}(p,q)$.  The map $h$, like $g$, represents a nonzero class in $\pi_{n-1}(\Lambda(p,q))$.  The associated map $\hat{h}:S^{n-1} \times [0,1] \rightarrow M$ is therefore surjective.

Choose $s_0 \in S$ with $d(p,s_0)=D$. There exists $(\theta_0,t_0) \in S^{n-1} \times [0,1]$ with $\hat{h}(\theta_0)(t_0)=s_0$. The curve $$h(\theta_0) \in \Lambda(p,q)$$ satisfies $$L(h(\theta_0))\leq \sqrt{E(g(\theta_0))}<D+\epsilon.$$ As $d(s_0,q)=\epsilon$, the restriction of $h(\theta_0)$ to $[t_0,1]$ has length at least $\epsilon$.  Therefore, the restriction of $h(\theta_0)$ to $[0,t_0]$ has length less than $D$, contradicting $d(p, s_0)=D$.  
\end{proof}

It remains to establish the existence of a continuous map $g:S^{n-1} \rightarrow \Lambda_9(p,q)$ representing a nonzero class in $\pi_{n-1}(\Lambda(p,q))$. To this end, choose $z \in M$ such that $p$ is not conjugate to $z$ along any geodesic in $M$ and such that $\epsilon:=d(q,z)$ is less than the minimum of the injectivity radius of $q$ and $1/2$.  Then critical points of $$E: \Lambda(p,z) \rightarrow \mathbb{R}$$ are non-degenerate.

Let $\bar{f}: S^{n-1} \rightarrow \Lambda(p,z)$ be a continuous map representing a nonzero class in $\pi_{n-1}(\Lambda(p,z))$.  By Lemma \ref{index}, critical points of length greater than two have index at least $n$.  It follows (cf. \cite[Theorem 2.5.16]{klingbook}) $\bar{f}$ is homotopic to a map $$f:S^{n-1} \rightarrow \Lambda^{4.1}(p,z).$$  Let $\tau:[0,\epsilon] \rightarrow M$ be a minimizing geodesic with $\tau(0)=z$ and $\tau(\epsilon)=q$ and define $$g:S^{n-1} \rightarrow \Lambda(p,q)$$ by $g=\tau f.$ For each $\theta \in S^{n-1}$, $$E(g(\theta))=\frac{E(f(\theta))}{1-\epsilon}+\epsilon\leq 2E(f(\theta))+\frac{1}{2}\leq 8.7$$ concluding the proof.
\end{proof}

\pagebreak


\bibliographystyle{abbrvnat}
\bibliography{polygon}

\end{document}